\def\tank#1{\protected@xdef\@thanks{\@thanks
 \protect\footnotetext[0]{#1}}}
\def\bigfoot{

 \@footnotetext}
\newcommand{\ea}{\end{array}}
\newtheorem{theorem}{Theorem}[section]
\newtheorem{lemma}{Lemma}[section]
\newtheorem{corollary}[theorem]{Corollary}
\newtheorem{definition}[theorem]{Definition}
\newtheorem{rem}{Remark}[section]
\def\beq{\begin{equation}}
\def\nneq{\end{equation}}
\def\bthm{\begin{theorem}}
\def\nthm{\end{theorem}}
\def\blem{\begin{lemma}}
\def\nlem{\end{lemma}}
\def\bprf{\begin{proof}}
\def\nprf{\end{proof}}
\def\bprop{\begin{prop}}
\def\nprop{\end{prop}}
\def\brmk{\begin{rem}}
\def\nrmk{\end{rem}}
\def\bexa{\begin{exa}}
\def\nexa{\end{exa}}
\def\bcor{\begin{cor}}
\def\ncor{\end{cor}}
\title[A note on decompositions of the stochastic convolution]{A note on decompositions  of the stochastic convolution driven by a white-fractional Gaussian noise}
\author{Ran Wang}
\curraddr[Wang, R.]{School of Mathematics and Statistics,  Wuhan University, P.R. China \& Computational Science Hubei Key Laboratory, Wuhan University,  P.R. China}
\author{Shiling Zhang}
\curraddr[Zhang S.]{School of Mathematics and Statistics,  Wuhan University, Wuhan, 430072,  China}
\email{shilingzhang@whu.edu.cn}
\date{}
\begin{document}
\maketitle

\noindent{\bf Abstract}
Let $u = \{u(t, x); (t,x)\in \mathbb R_+\times \mathbb R\}$ be the solution to a linear stochastic heat equation driven by a Gaussian noise, which is a Brownian motion in time and a fractional  Brownian motion in space with Hurst parameter $H\in(0, 1)$.
For any given $x\in \mathbb R$ (resp. $t\in \mathbb R_+$), we show  a decomposition of  the stochastic process $t\mapsto u(t,x)$ (resp. $x\mapsto u(t,x)$) as the sum of a fractional Brownian motion with Hurst parameter $H/2$ (resp. $H$) and a stochastic  process with  $C^{\infty}$-continuous trajectories.   Some applications of those decompositions are discussed.
\vskip0.3cm
\noindent {\bf Keywords} {Stochastic heat equation; fractional Brownian motion;
path regularity; law of the iterated logarithm.}

\vskip0.3cm

\noindent {\bf Mathematics Subject Classification (2000)}{ 60G15, 60H15, 60G17.}
\maketitle


\section{Introduction}
 Consider the following  one-dimensional stochastic heat equation
 \begin{equation}\label{eq SPDE}
 \frac{\partial u}{\partial t}=\frac{\kappa}{2}\frac{\partial ^2 u}{\partial x^2}+\dot W, \ \ \ t\ge0, x\in \mathbb R,
 \end{equation}
with some initial condition $u(0,x) \equiv0$, $\dot W=\frac{\partial^2 W}{\partial t\partial x}$, where  $W$ is a centered Gaussian process with covariance given by
\begin{equation}\label{eq cov}
 \mathbb E[W(s,x)W(t,y)]=\frac12\left(|x|^{2H}+|y|^{2H}-|x-y|^{2H} \right)(s\wedge t), \ \
 \end{equation}
for any  $s,t\ge0, x, y\in\mathbb R$, with $H \in (0,1)$. That is, $W$ is a standard Brownian motion in time and a fractional Browinian motion (fBm for short) with Hurst parameter $H$ in space.

When $H=1/2$,  $\dot W$ is a space-time white noise and $u$ is the classical stochastic convolution, which has been understood very well  (see e.g.,  \cite{Walsh}).   Theorem 3.3 in \cite{K} tells us that    the stochastic process $t\mapsto u(t,x)$ (resp. $x\mapsto u(t,x)$) can be represented as the sum of a fBm with Hurst parameter $1/4$ (resp. $1/2$) and a stochastic  process with  $C^{\infty}$-continuous trajectories.  Hence,    locally  $t\mapsto u(t,x)$ (resp. $x\mapsto u(t,x)$ ) behaves as a fBm with Hurst parameter $1/4$ (resp. $1/2$), and it has the same   regularity  (such as  the H\"older continuity, the global and local moduli of continuities,  Chung-type law  of the iterated logarithm) as a fBm with Hurst parameter $1/4$ (resp. $1/2$). See
    Lei and Nualart \cite{LN} for  earlier related work.

When $H\in(1/2,1)$,   Mueller and Wu \cite{MW} used such a  decomposition  to study the critical dimension for hitting points for the fBm; Tudor and Xiao \cite{TX} studied  the sample regularities of the solution  for the fractional-colored stochastic heat equation by using this decomposition.

 When $H\in (0,1/2)$, the spatial covariance $\Lambda$, given in
$$
\mathbb E\left[\dot W(s,x)\dot W(t,y)\right]=\delta_0(t-s)\Lambda(x-y),
$$
  is a distribution, which fails to be positive. The study of stochastic partial  differential  equations with this kind of  noises lies outside the scope of application of the classical references (see, e.g., \cite{ Dal, PZ, DPZ}).
    It seems that the decomposition results in \cite{MW} and \cite{TX} are very hard to be extended to the case of $H\in (0,1/2)$.

 Recently,  the problems of the stochastic partial differential equation  driven by a fractional Gaussian noise  in space with $H\in(0,1/2)$  have attracted many authors' attention.
 For example,
Balan  et al. \cite{BJQ} studied the existence and uniqueness of  a mild solution  for   stochastic heat equation   with affine multiplicative fractional noise in space, that is, the diffusion coefficient is given by an affine function $\sigma(x)=ax+b$. They
  established  the H\"older  continuity of the solution  in \cite{BJQ2016}. The case of the general nonlinear coefficient $\sigma$, which has a Lipschitz derivative and satisfies $\sigma(0)=0$, has been studied in  Hu et al. \cite{HHLNT}.

 In this paper, we give   unified  forms of the decompositions for the stochastic convolution about both temporal and spatial variables when $H\in (0,1)$. That is,  for any given $x\in \mathbb R$ (resp. $t\in \mathbb R_+$), we show  a decomposition of  the stochastic process $t\mapsto u(t,x)$ (resp. $x\mapsto u(t,x)$) as the sum of a fractional Brownian motion with Hurst parameter $H/2$ (resp. $H$) and a stochastic  process with  $C^{\infty}$-continuous trajectories.
  Those decompositions not only lead to a better understanding of the   H\"older regularity of the stochastic convolution   \eqref{eq SPDE}, but also give the uniform and local moduli of continuities and Chung-type law of iterated logarithm.

 Notice that our decompositions are natural   extensions of  \cite[Theorem 3.3]{K}, and they are given in    different forms  with that obtained in \cite{MW} and \cite{TX}.

 The rest of this paper is organized as follows. In Section 2, we recall some results about the
Gaussian noise    and the stochastic convolution. The main results are given in Section 3, and their proofs are given in Section 4.

\section{The Gaussian noise and the stochastic  convolution}

 In this section, we introduce the Gaussian noise and corresponding stochastic integration, borrowed from  \cite{BJQ} and \cite{PT}.

 Let $\mathcal S$ be the space of Schwartz functions, and $\mathcal S'$ be its dual, the space of tempered distributions. The Fourier transform of a function $u\in \mathcal S$ is defined as
 $$
 \mathcal Fu(\xi)=\int_{\mathbb R}e^{-i\xi x}u(x)dx,
 $$
and the inverse Fourier transform is given by $\mathcal F^{-1}u(\xi)=(2\pi)^{-1}\mathcal Fu(-\xi)$.

Given a domain $G\subset \mathbb R^n$ for some $n\ge1$, let $\mathcal D(G)$ be the space of all real-valued infinitely differential functions with compact support on $G$. According to \cite[Theorem 3.1]{PT}, the noise $W$ can be represented by a zero-mean Gaussian family $\{W(\phi), \phi\in \mathcal D((0,\infty)\times \mathbb R)\}$ defined on a complete probability space $(\Omega, \mathcal F, \mathbb P)$, whose covariance is given by
\begin{equation}\label{eq cov 2}
 \mathbb E[W(\phi)W(\psi)]=c_{1, H}\int_{\mathbb R_+\times \mathbb R} \mathcal F\phi(s,\xi)\overline{\mathcal F\psi(s,\xi)}|\xi|^{1-2H}dsd\xi,\ \ \ \ H \in (0,1),
\end{equation}
for any $\phi ,\psi \in \mathcal D((0,\infty)\times \mathbb R)$, where the Fourier transforms $\mathcal F \phi, \mathcal F\psi$ are understood as Fourier transforms in space only, $\bar z$ is the conjugation of a complex number $z$ and
\begin{equation}\label{const c1H}
c_{1, H}=\frac{1}{2\pi} \Gamma(2H+1)\sin(H \pi).
\end{equation}

Let $\mathcal H$ be the Hilbert space obtained by completing $\mathcal D (\mathbb R)$ under the inner production,
\begin{equation}
\langle \phi, \psi\rangle_{\mathcal H}=
\left\{
\begin{aligned}\label{eq inn}
  &c_{2, H}^2\int_{\mathbb R^2} (\phi( x+y)-\phi(x))(\psi( x+y)-\psi(x))|y|^{2H-2} dxdy,& H&\in(0,1/2);\\
  & \int_{\mathbb R} \phi(x)\psi(x)dx, & H&=1/2;\\
  &c_{3, H}^2\int_{\mathbb R^2} \phi( x+y)\psi(x)|y|^{2H-2} dxdy, &H&\in(1/2,1),
\end{aligned}
\ \right.
\end{equation}
for any $\phi ,\psi \in \mathcal H $,
where
$c_{2, H}^2=H\left(\frac12-H\right)$,
$c_{3, H}^2=H(2H-1).$
Denote $\|\phi\|_{\mathcal H}:=\sqrt{\langle \phi, \phi\rangle_{\mathcal H}}$ for any $\phi\in \mathcal H$.
Then
\begin{align}\label{eq cov 3}
 \mathbb E[W(\phi)W(\psi)]
 =\mathbb E\left[\int_{\mathbb R_+}\langle \phi(s), \psi(s)\rangle_{\mathcal H} ds\right].
\end{align}
See e.g., \cite{HHLNT, PT, TTV}.

Let
\begin{equation}\label{eq p}
p_t(x)=\frac{1}{\sqrt{2\pi \kappa t}}e^{-\frac{x^2}{2\kappa t}}
\end{equation}
be the heat kernel on the real line related to $\frac{\kappa}{2}\Delta$.
\begin{definition}\label{def solut}
  We say that a random field $u=\{
 u(t, x); t\in [0, T], x\in \mathbb R\}$ is a  mild  solution of \eqref{eq SPDE}, if $u$ is
predictable and for any $(t, x)\in  [0, T]\times \mathbb R$,
\begin{equation}\label{eq solut}
u(t,x)= \int_0^t\int_{\mathbb R}p_{t-s}(x-y)W(ds,dy),\ \ \ a.s..
\end{equation}
 It is usually called the {\it stochastic convolution}. Denote $u(t,x)$ by $u_t(x)$ for any $(t,x)\in \mathbb R_+\times\mathbb R$.
\end{definition}

\section{Main results}

Recall that a mean-zero Gaussian process $\{X_t\}_{t\ge0}$ is called a  (two-sided) fractional Brownian motion     with    Hurst parameter $H\in (0,1)$, if it satisfies
\begin{align}\label{eq fBm}
X_0=0, \ \ \ \mathbb E\left(|X_t-X_s|^2\right)=|t-s|^{2H}, \ \ \ t, s\in \mathbb R.
\end{align}

\begin{theorem}\label{thm main} For $H\in (0,1)$, the following results hold for the stochastic convolution $u=\{u_t(x); (t,x)\in \mathbb R_+\times \mathbb R\}$ given by \eqref{eq solut}:
\begin{itemize}
  \item[(a)]  For every $x\in \mathbb R$, there exists a fBm $\{X_t\}_{t\ge0}$ with  Hurst parameter $H/{2}$, such that
$$
u_t(x)- C_{1, H, \kappa} X_t, \ \ \ \ t\ge0,
$$
defines a mean-zero Gaussian process with a version that is continuous on $\mathbb R_+$ and infinitely differentiable on $(0, \infty)$, where
\begin{equation}\label{eq c} C_{1, H, \kappa}:=\left(  \frac{2^{1-H}\Gamma(2H)}{\kappa^{1-H}\Gamma(H)} \right)^{\frac12}.
\end{equation}
  \item[(b)]  For every $t>0$, there exists a fBm $\{B(x)\}_{x\in\mathbb R}$   with  Hurst parameter $H$,
 such that
 $$
 u_t(x)- \kappa^{-\frac12} B(x),\ \ \ \ x\in\mathbb R,
 $$
 defines a Gaussian random field with a version that is continuous on $\mathbb R$ and infinitely differentiable on $\mathbb R$.
\end{itemize}
\end{theorem}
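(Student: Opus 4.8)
The plan is to reduce everything to Gaussian covariance computations: since $u$ is a mean-zero Gaussian field by \eqref{eq solut}, a decomposition $u = C X + Y$ is determined in law once the covariances are matched, provided the fBm and the smooth remainder can be realized on one probability space. First I would compute the temporal covariance for fixed $x$. Using the spectral covariance \eqref{eq cov 2} together with $\mathcal F[p_{t-s}(x-\cdot)](\xi)=e^{-i\xi x}e^{-\kappa(t-s)\xi^2/2}$ (from \eqref{eq p}), the factor $e^{-i\xi x}$ has modulus one and cancels, so $\mathbb E[u_t(x)u_{t'}(x)]$ is independent of $x$ and equals $c_{1,H}\int_0^{t\wedge t'}\!\int_{\mathbb R}e^{-\kappa(t+t'-2s)\xi^2/2}|\xi|^{1-2H}\,d\xi\,ds$. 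Evaluating the Gaussian $\xi$-integral via $\int_{\mathbb R}e^{-a\xi^2}|\xi|^{1-2H}d\xi=\Gamma(1-H)a^{-(1-H)}$ and then the $s$-integral gives
\[
\mathbb E[u_t(x)u_{t'}(x)]=A\big[(t+t')^H-|t-t'|^H\big],\qquad A=\tfrac{c_{1,H}\Gamma(1-H)}{2H}(\kappa/2)^{H-1},
\]
which is a constant multiple of the covariance of a bifractional Brownian motion with indices $(1/2,H)$; its singular part is the $-|t-t'|^H$ term.

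For part (a), to isolate a genuine fBm of index $H/2$ I would apply the identity $z^H=\frac{H}{\Gamma(1-H)}\int_0^\infty(1-e^{-\theta z})\theta^{-1-H}d\theta$ with $z=t+t'$, and use $e^{-\theta(t+t')}=e^{-\theta t}e^{-\theta t'}$ together with $1-ab=(1-a)+(1-b)-(1-a)(1-b)$, to write $(t+t')^H=t^H+t'^H-S(t,t')$ with $S(t,t')=\frac{H}{\Gamma(1-H)}\int_0^\infty(1-e^{-\theta t})(1-e^{-\theta t'})\theta^{-1-H}d\theta$. The key features are that $S$ is manifestly a nonnegative-definite kernel (a genuine covariance) and that it is $C^\infty$ on $(0,\infty)^2$ with $S(t,t)\asymp t^H\to0$ as $t\downarrow0$, since each $t$-differentiation produces a factor $\theta e^{-\theta t}$ controlled near $\theta=\infty$ for $t>0$ and near $\theta=0$ by $(1-e^{-\theta t'})\sim\theta t'$. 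Substituting, $\mathbb E[u_t(x)u_{t'}(x)]=C_{1,H,\kappa}^2\cdot\tfrac12(t^H+t'^H-|t-t'|^H)-A\,S(t,t')$, where matching the $|t-t'|^H$ coefficient forces $C_{1,H,\kappa}^2=2A$; a short check using $\Gamma(2H+1)=2H\Gamma(2H)$ and $\Gamma(H)\Gamma(1-H)=\pi/\sin(\pi H)$ confirms this equals the constant in \eqref{eq c}. Enlarging the space by an independent centered Gaussian $X^\circ$ with covariance $A\,S$, the sum $u_\cdot(x)+X^\circ$ has covariance $C_{1,H,\kappa}^2\cdot\tfrac12(t^H+t'^H-|t-t'|^H)$, so $X_t:=C_{1,H,\kappa}^{-1}(u_t(x)+X^\circ_t)$ satisfies \eqref{eq fBm} with Hurst index $H/2$, while $u_t(x)-C_{1,H,\kappa}X_t=-X^\circ_t$ is the smooth remainder.

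For part (b), fix $t>0$ and compute the spatial covariance the same way, now doing the $s$-integral first, $\int_0^t e^{-\kappa(t-s)\xi^2}ds=(1-e^{-\kappa t\xi^2})/(\kappa\xi^2)$, so that $\mathbb E[u_t(x)u_t(x')]=\frac{c_{1,H}}{\kappa}\int_{\mathbb R}e^{-i\xi(x-x')}\frac{1-e^{-\kappa t\xi^2}}{|\xi|^{1+2H}}d\xi$. Splitting $1-e^{-\kappa t\xi^2}$ into its constant and Gaussian parts and passing to increment variances yields $\mathbb E[(u_t(x)-u_t(x'))^2]=\kappa^{-1}|x-x'|^{2H}-g(x-x')$, where the leading term uses the normalization built into $c_{1,H}$ via \eqref{const c1H} (i.e.\ $\frac{2c_{1,H}}{\kappa}\int_{\mathbb R}\frac{1-\cos(\xi h)}{|\xi|^{1+2H}}d\xi=\kappa^{-1}|h|^{2H}$, the consistency of \eqref{eq cov} with \eqref{eq cov 2}), and $g(h)=\frac{2c_{1,H}}{\kappa}\int_{\mathbb R}(1-\cos\xi h)\frac{e^{-\kappa t\xi^2}}{|\xi|^{1+2H}}d\xi\ge0$ is the increment variance of a stationary-increment Gaussian process $G$ whose spectral density $\propto e^{-\kappa t\xi^2}|\xi|^{-1-2H}$ decays faster than any power and hence has $C^\infty$ trajectories. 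Taking such a $G$ independent of $W$, the field $u_t(\cdot)+G$ has increment variance $\kappa^{-1}|h|^{2H}$, so $B(x):=\kappa^{1/2}[(u_t(x)+G(x))-(u_t(0)+G(0))]$ is a fBm of index $H$, and $u_t(x)-\kappa^{-1/2}B(x)=u_t(0)+G(0)-G(x)$ is $C^\infty$ in $x$.

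The one ingredient common to both parts is the passage from smoothness of the residual's covariance to smoothness of its paths: a centered Gaussian process whose covariance is $C^\infty$ on an open set is mean-square infinitely differentiable there, and applying Kolmogorov's continuity criterion to each mean-square derivative (again Gaussian with continuous covariance) produces a modification that is genuinely $C^\infty$, continuous up to $t=0$ in (a) because $S(t,t)\to0$. I expect the main obstacles to be the exact evaluation of the covariance integrals (to pin down $C_{1,H,\kappa}$ and the $\kappa^{-1/2}$ normalization) and the verification that the two smooth-part kernels, $S$ in (a) and the $e^{-\kappa t\xi^2}$-regularized kernel in (b), are simultaneously legitimate covariances and $C^\infty$, since both the independent-process realization and the final path-regularity step depend on exactly these two properties.
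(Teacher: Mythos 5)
Your proposal is correct: all the covariance evaluations check out (in particular $2A=C_{1,H,\kappa}^2$ via $\Gamma(1-H)\sin(\pi H)/\pi=1/\Gamma(H)$, and the normalization $\tfrac{2c_{1,H}}{\kappa}\int_{\mathbb R}(1-\cos\xi h)|\xi|^{-1-2H}d\xi=\kappa^{-1}|h|^{2H}$), and the underlying decomposition is the same as the paper's --- in both cases one adds an independent Gaussian process whose covariance is exactly the deficit between the covariance of $u$ and that of an fBm, then transfers smoothness of that covariance to smoothness of paths via mean-square derivatives and Kolmogorov. The routes differ in how the deficit is identified and realized. For (a), you compute the full temporal covariance $A[(t+t')^H-|t-t'|^H]$, recognize it as a bifractional Brownian motion, and peel off the fBm with the subordination identity $z^H=\tfrac{H}{\Gamma(1-H)}\int_0^\infty(1-e^{-\theta z})\theta^{-1-H}d\theta$ (the Lei--Nualart decomposition, which the paper only cites as related work); the paper instead computes the increment variance $\mathbb E|u_{t+\varepsilon}(x)-u_t(x)|^2$, isolates a ``missing integral'' $\int_t^\infty\int_{\mathbb R}e^{-\kappa s\xi^2}(1-e^{-\kappa\varepsilon\xi^2/2})^2|\xi|^{1-2H}\,ds\,d\xi$, and realizes the corrector as an explicit Wiener integral $T_t$ against an independent white noise --- after the substitution $\theta=\kappa\xi^2/2$ its covariance is exactly your $A\,S(t,t')$, so the two correctors coincide in law. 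For (b), your single spectral computation handles all $H\in(0,1)$ at once, whereas the paper splits into $H<1/2$ and $H>1/2$ using the real-space forms of the inner product \eqref{eq inn} and separate integral identities; your version is cleaner here, and your centering $B(x)=\kappa^{1/2}[(u_t(x)+G(x))-(u_t(0)+G(0))]$ correctly enforces $B(0)=0$. The only points to tighten are small: continuity of the corrector at $t=0$ in (a) needs the increment bound $A(S(t,t)-2S(t,s)+S(s,s))\le A\int_0^\infty(1-e^{-\theta|t-s|})^2\theta^{-1-H}d\theta\cdot\tfrac{H}{\Gamma(1-H)}=C|t-s|^H$ plus Kolmogorov, not merely $S(t,t)\to0$; and in (b) the spectral measure $e^{-\kappa t\xi^2}|\xi|^{-1-2H}d\xi$ is infinite near $\xi=0$, so $G$ must be built as a stationary-increment (not stationary) process, as you in fact state.
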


Let us observe that Theorem \ref{thm main} says that, locally $t\mapsto u_t(x)$ behaves as a fBm with Hurst parameter $H/2$ and $x\mapsto u_t(x)$ behaves as  a fBm with Hurst parameter $H$. Thus, for instance, it follows from this observation and known facts about fBm (see e.g., \cite{MRm}, \cite[Chapter 1]{Mis},   or \cite{Xiao2008}), we can obtain the following   sample regularities  of the stochastic convolution.

By applying Theorem \ref{thm main} and   the H\"older continuity  result of fBms \cite[Chapter 1]{Mis}, we have the following well-known results, (see e.g.,  \cite[Chapter 3]{Walsh},  \cite[Theorem 1.1]{BJQ2016}).
\begin{corollary}
\begin{itemize}
  \item[(a)]
 For every $x\in \mathbb R$, the stochastic process
   $t\mapsto u_t(x)$ is a.s. H\"older continuous of parameter $H/2-\varepsilon$ for every $\varepsilon >0$.
     \item[(b)]
 For every $t>0$, the stochastic process
   $x\mapsto u_t(x)$ is a.s. H\"older continuous of parameter $H-\varepsilon$ for every $\varepsilon >0$.
\end{itemize}
 \end{corollary}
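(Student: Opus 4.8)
The plan is to read both assertions straight off the decomposition in Theorem~\ref{thm main}, using the elementary fact that on a bounded interval the H\"older exponent of a sum of two functions is the smaller of their individual exponents; thus the rough (fractional Brownian) summand dictates the sample regularity while the $C^\infty$ summand is harmless.

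First I would freeze the appropriate variable. For part~(a), fix $x\in\mathbb R$ and write, by Theorem~\ref{thm main}(a),
\[
u_t(x)=C_{1,H,\kappa}\,X_t+Y_t,\qquad t\ge 0,
\]
where $X$ is an fBm with Hurst parameter $H/2$ and $Y$ admits a version that is $C^\infty$ on $(0,\infty)$. The regularity of $X$ is classical: by Kolmogorov's continuity theorem together with the Gaussian moment identity $\mathbb E|X_t-X_s|^{2p}=c_p\,|t-s|^{pH}$, or directly by \cite[Chapter~1]{Mis}, the process $X$ is a.s. locally H\"older continuous of every order $<H/2$. On any compact subinterval of $(0,\infty)$ the trajectory $t\mapsto Y_t$ is $C^1$, hence Lipschitz, hence H\"older of \emph{every} exponent in $(0,1]$, in particular of exponent $H/2-\varepsilon$. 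Adding the two summands and invoking the ``minimum exponent'' remark yields that $t\mapsto u_t(x)$ is a.s. H\"older continuous of order $H/2-\varepsilon$ for every $\varepsilon>0$.

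Part~(b) is entirely analogous and in fact cleaner: fixing $t>0$ and invoking Theorem~\ref{thm main}(b), I would write $u_t(x)=\kappa^{-1/2}B(x)+\widetilde Y(x)$ with $B$ an fBm of Hurst parameter $H$ and $\widetilde Y$ a version that is $C^\infty$ on \emph{all} of $\mathbb R$. The same two ingredients, namely the H\"older-$(H-\varepsilon)$ regularity of $B$ and the local Lipschitz regularity of $\widetilde Y$, give that $x\mapsto u_t(x)$ is a.s. H\"older continuous of order $H-\varepsilon$ for every $\varepsilon>0$.

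I expect the only delicate point to be the temporal origin in part~(a): Theorem~\ref{thm main}(a) guarantees only that $Y$ is continuous (not $C^\infty$) at $t=0$, so the argument above directly yields H\"older regularity on every compact subinterval of $(0,\infty)$. To include a neighbourhood of $0$ there are two options: either adopt the usual convention that ``a.s. H\"older of parameter $\alpha$'' means local H\"older continuity on compacts of the open domain, or supply the short increment estimate $\mathbb E|Y_t-Y_s|^2\lesssim|t-s|^{H}$ (read off from the covariance of the remainder produced in Theorem~\ref{thm main}, by bounding $(e^{-\lambda s}-e^{-\lambda t})^2\le\min(1,\lambda^2|t-s|^2)$) and apply Kolmogorov's theorem to $Y$ on $[0,T]$. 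Either route leaves the stated conclusion unchanged.
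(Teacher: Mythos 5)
Your proposal is correct and follows exactly the route the paper intends: the paper derives this corollary directly from the decomposition of Theorem~\ref{thm main} together with the classical H\"older regularity of fBm, absorbing the smooth remainder. Your additional care about the temporal origin in part~(a) is a reasonable refinement of a detail the paper leaves implicit.
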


By applying Theorem \ref{thm main} and   the variations of fBms (see e.g.,\cite{MRm}), we have the following results.
\begin{corollary}\label{prop sample regularity11} Let $\mathcal N$ be a standard normal random variable. Then, for every $x\in \mathbb R$ and $[a,b]\subset \mathbb R_+$,
   \begin{align}\label{eq  variation 1}
    \lim_{n\rightarrow \infty}\sum_{a2^n\le i\le b 2^n}\left[u_{(i+1)/2^n}(x)-u_{i/2^n}(x) \right]^{\frac 2 H}= (b-a)C_{1, H, \kappa}^{2/H}\mathbb E\left[|\mathcal N|^{2/H}\right], \ \   a.s.;
\end{align}
  and for every $t>0$, $[c,d]\subset \mathbb R$,
   \begin{align}\label{eq  variation 2}
    \lim_{n\rightarrow \infty}\sum_{c2^n\le i\le d 2^n}\left[u_{t}((i+1)/2^n)-u_{t}(i/2^n) \right]^{\frac 1 H}= (d-c) C_{2, H, \kappa}^{1/H} \mathbb E\left[|\mathcal N|^{1/H}\right], \ \   a.s..
\end{align}
\end{corollary}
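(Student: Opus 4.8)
The plan is to substitute the decomposition of Theorem~\ref{thm main} into each power-variation sum and to argue that, once raised to the critical power, only the fractional Brownian part contributes in the limit. For the temporal statement \eqref{eq  variation 1} I would fix $x\in\mathbb R$ and write, by Theorem~\ref{thm main}(a),
$$u_t(x)=C_{1,H,\kappa}X_t+R_t,$$
where $X$ is an fBm of Hurst index $\alpha:=H/2$ and $R$ is the remainder, continuous on $\mathbb R_+$ and $C^\infty$ on $(0,\infty)$. Writing $p:=2/H$, so that $p>1$ and $\alpha p=1$, and abbreviating the dyadic increments by $\Delta_i^n Y:=Y_{(i+1)/2^n}-Y_{i/2^n}$, the sum in \eqref{eq  variation 1} becomes $\sum_i\bigl|C_{1,H,\kappa}\Delta_i^n X+\Delta_i^n R\bigr|^{p}$.

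The first ingredient is the classical power variation of fBm along dyadic partitions (see e.g. \cite{MRm}), which gives, almost surely,
$$\lim_{n\to\infty}\sum_{a2^n\le i\le b2^n}\bigl|C_{1,H,\kappa}\Delta_i^n X\bigr|^{p}=(b-a)\,C_{1,H,\kappa}^{p}\,\mathbb E\bigl[|\mathcal N|^{p}\bigr],$$
precisely the right-hand side of \eqref{eq  variation 1}. It is here that the dyadic mesh $2^{-n}$ matters: summability of the increment variances is what permits a Borel--Cantelli argument upgrading $L^2$ convergence to almost-sure convergence.

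It then remains to check that replacing $C_{1,H,\kappa}\Delta_i^n X$ by the full increment does not alter the limit. I would use the elementary bound $\bigl||a+b|^{p}-|a|^{p}\bigr|\le C_p\bigl(|a|^{p-1}|b|+|b|^{p}\bigr)$, valid for $p\ge1$, followed by H\"older's inequality with exponents $p/(p-1)$ and $p$, to obtain
$$\Bigl|\sum_i\bigl|C_{1,H,\kappa}\Delta_i^n X+\Delta_i^n R\bigr|^{p}-\sum_i\bigl|C_{1,H,\kappa}\Delta_i^n X\bigr|^{p}\Bigr|\le C_p\Bigl(V_n^{(p-1)/p}W_n^{1/p}+W_n\Bigr),$$
with $V_n:=\sum_i|C_{1,H,\kappa}\Delta_i^n X|^{p}$ and $W_n:=\sum_i|\Delta_i^n R|^{p}$. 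Since $V_n$ converges to a finite constant by the previous step, everything reduces to showing $W_n\to0$ almost surely. On any $[a,b]$ with $a>0$ this is immediate: $R$ is Lipschitz there, so $|\Delta_i^n R|\le L2^{-n}$ and $W_n\le L^p(b-a)2^{(1-p)n}\to0$ because $p>1$. The spatial identity \eqref{eq  variation 2} will follow along identical lines with $p:=1/H$, $\alpha:=H$, the fBm $\kappa^{-1/2}B$ of Theorem~\ref{thm main}(b) and $C_{2,H,\kappa}=\kappa^{-1/2}$; here $R$ is $C^\infty$ on all of $\mathbb R$, so the Lipschitz bound holds on every compact $[c,d]$ and no boundary issue arises.

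The main obstacle is the temporal endpoint, i.e. the case $a=0$ in \eqref{eq  variation 1}, where $R$ is only continuous and the crude bound $|\Delta_i^n R|\le L2^{-n}$ fails near $t=0$. The hard part will be to control $W_n$ on a shrinking neighborhood $[0,\delta]$. I expect this can be done either by sharpening Theorem~\ref{thm main}(a) to show that $R$ is H\"older continuous of some order $\gamma>H/2$ up to $t=0$ --- which yields $\sum_{0\le i\le\delta 2^n}|\Delta_i^n R|^{p}=O\bigl(2^{(1-\gamma p)n}\bigr)\to0$ since $\gamma p>1$ --- or by splitting $[0,b]=[0,\delta]\cup[\delta,b]$, applying the $a>0$ case on $[\delta,b]$, and using that the fBm variation and the full variation over $[0,\delta]$ are both continuous in $\delta$ and agree in the limit $\delta\downarrow0$. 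Apart from this boundary estimate, the proof is a routine combination of the known fBm variation with the smoothness furnished by Theorem~\ref{thm main}.
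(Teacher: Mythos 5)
Your strategy---substitute the decomposition of Theorem \ref{thm main}, invoke the classical almost-sure power variation of fBm along dyadic partitions, and absorb the remainder via the bound $\bigl||a+b|^{p}-|a|^{p}\bigr|\le C_p(|a|^{p-1}|b|+|b|^{p})$ and H\"older's inequality---is exactly the route the paper takes (the paper gives no more detail than citing the decomposition and the fBm variation results of \cite{MRm}). For the spatial statement \eqref{eq  variation 2} and for the temporal one on $[a,b]$ with $a>0$, your argument is complete and correct.

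The one point that needs repair is your treatment of the endpoint $a=0$, and specifically your first proposed fix is based on a false premise: the remainder in part (a) is $R_t=-T_t$, and combining \eqref{eq T} with \eqref{eq J12} gives
$\mathbb E\bigl[|T_{\varepsilon}-T_0|^2\bigr]=c_{1,H}\,\Gamma(1-H)H^{-1}(2^{1-H}-1)\kappa^{H-1}\varepsilon^{H}$,
so $T$ is H\"older of order exactly $H/2$ at the origin and of no better order; no exponent $\gamma>H/2$ exists there. The remainder is nevertheless negligible, but for a quantitative reason you must extract: differentiating under the Wiener integral gives $\mathbb E[|T'_s|^2]\asymp s^{H-2}$, whence (up to logarithmic factors, by a standard Gaussian supremum/Borel--Cantelli argument) $|\Delta_i^nT|\lesssim 2^{-n}\,(i2^{-n})^{H/2-1}$ for $i\ge1$, while the single term $i=0$ satisfies $|\Delta_0^nT|^{2/H}\lesssim 2^{-n}$. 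Since $(H/2-1)\cdot\tfrac{2}{H}=1-\tfrac{2}{H}<-1$, one gets $\sum_{0\le i\le b2^n}|\Delta_i^nT|^{2/H}=O\bigl(2^{-n}\sum_{i\ge1}i^{\,1-2/H}\bigr)\to0$, which is what your $W_n\to0$ requires on all of $[0,b]$. Note also that your second fix is not a free pass: the uniform modulus $|\Delta_i^nu|\lesssim\sqrt{n}\,2^{-nH/2}$ only yields $\sum_{0\le i\le\delta2^n}|\Delta_i^nu|^{2/H}\lesssim\delta\,n^{1/H}$, which does not vanish as $n\to\infty$, so the ``continuity in $\delta$'' of the full variation over $[0,\delta]$ is precisely what cannot be obtained without the derivative estimate on $T$ above. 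With that estimate in hand, the rest of your proof goes through verbatim.
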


By applying Theorem \ref{thm main} and the global and local moduli of continuity results for fBms  (see e.g., \cite[Chapter 7]{MRm}), we have
\begin{corollary}\label{prop sample regularity2 }
\begin{itemize}
  \item[(a)](Global moduli of continuity for intervals).
  For every $x\in \mathbb R$ and $[a,b]\subset \mathbb R_+$, we have
  \begin{align}\label{eq LIL 1}
   \lim_{\varepsilon \rightarrow 0+}\sup_{s,t\in[a,b], 0<|t-s|\le \varepsilon}\frac{|u_{t}(x)-u_{s}(x)|}{ |t-s|^{H/2}\sqrt{2\ln  (1/|t-s|)} }= C_{3, H, \kappa},\ \ \     a.s.;
\end{align}
  and for every $t>0$, $[c,d]\subset \mathbb R$, we have
  \begin{align}\label{eq LIL 2}
   \lim_{\varepsilon \rightarrow 0+}\sup_{x,y\in[c,d], 0<|x-y|\le \varepsilon}\frac{|u_{t}(x)-u_{t}(y)|}{ |x-y|^{H}\sqrt{2\ln  (1/|x-y|)} }= C_{4, H, \kappa},\ \ \     a.s..
\end{align}

  \item[(b)](Local moduli of continuity for intervals).
  For every $t>0$ and $x\in \mathbb R$ , we have
   \begin{align}\label{eq local moduli 1}
   \varlimsup_{\varepsilon \rightarrow 0+}\frac{\sup_{0<|t-s|\le \varepsilon} |u_{t}(x)-u_{s}(x)|}{\varepsilon ^{H/2}\sqrt{2\ln\ln (1/\varepsilon )} }= C_{5, H, \kappa},\ \ \     a.s.;
   \end{align}
  and
   \begin{align}\label{eq local moduli 2}
   \varlimsup_{\varepsilon \rightarrow 0+}\frac{\sup_{0<|x-y|\le \varepsilon} |u_{t}(x)-u_{t}(y)|}{\varepsilon ^{H}\sqrt{2\ln\ln (1/\varepsilon)} }= C_{6, H, \kappa},\ \ \     a.s..
   \end{align}

\end{itemize}

\end{corollary}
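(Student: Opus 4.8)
The plan is to obtain each of the four limits by feeding the decomposition of Theorem~\ref{thm main} into the corresponding (global, resp.\ local) modulus of continuity of a standard fBm, and then showing that the infinitely differentiable remainder contributes nothing at the relevant scale. I will spell out \eqref{eq LIL 1}; the other three limits follow by the identical mechanism with the obvious substitutions. Fix $x\in\mathbb R$ and use Theorem~\ref{thm main}(a) to write
$$
u_t(x)=C_{1,H,\kappa}\,X_t+R_t,\qquad t\ge0,
$$
where $\{X_t\}$ is an fBm of Hurst index $H/2$ and $t\mapsto R_t$ has a version that is $C^\infty$ on $(0,\infty)$. I then recall the classical global modulus of continuity of fBm \cite[Chapter 7]{MRm}: for a standard fBm of index $\alpha$ and any compact interval,
$$
\lim_{\varepsilon\to0+}\ \sup_{s,t\in[a,b],\,0<|t-s|\le\varepsilon}\frac{|X_t-X_s|}{|t-s|^{\alpha}\sqrt{2\ln(1/|t-s|)}}=1\qquad\text{a.s.}
$$

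The crucial point is that the smooth part is negligible. Taking $[a,b]\subset(0,\infty)$, the trajectory $t\mapsto R_t$ is $C^1$ on $[a,b]$, hence Lipschitz with some a.s.\ finite random constant $L$, so that $|R_t-R_s|\le L|t-s|$ there. Since $H/2<1$,
$$
\sup_{s,t\in[a,b],\,0<|t-s|\le\varepsilon}\frac{|R_t-R_s|}{|t-s|^{H/2}\sqrt{2\ln(1/|t-s|)}}\ \le\ \sup_{0<r\le\varepsilon}\frac{L\,r^{\,1-H/2}}{\sqrt{2\ln(1/r)}}\ \longrightarrow\ 0
$$
as $\varepsilon\to0+$, almost surely. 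Writing $u_t(x)-u_s(x)=C_{1,H,\kappa}(X_t-X_s)+(R_t-R_s)$, dividing by $|t-s|^{H/2}\sqrt{2\ln(1/|t-s|)}$, and applying the triangle inequality sandwiches the supremum ratio for $u$ between $C_{1,H,\kappa}$ times the ratio for $X$ and the same quantity shifted by $\pm$ the remainder ratio above. Letting $\varepsilon\to0+$ then yields \eqref{eq LIL 1} with $C_{3,H,\kappa}=C_{1,H,\kappa}$.

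The remaining three limits go the same way. For \eqref{eq LIL 2} I invoke Theorem~\ref{thm main}(b), $u_t(x)=\kappa^{-1/2}B(x)+S(x)$ with $B$ an fBm of index $H$ and $S$ smooth, and since $H<1$ the linear remainder is again negligible against $|x-y|^{H}\sqrt{2\ln(1/|x-y|)}$; this gives $C_{4,H,\kappa}=\kappa^{-1/2}$. For the local moduli \eqref{eq local moduli 1}--\eqref{eq local moduli 2} I replace the global modulus by the local (iterated-logarithm) modulus of continuity of fBm at a fixed point \cite[Chapter 7]{MRm}, whose limsup equals an explicit fBm constant; the same Lipschitz estimate applies because the linear term is $o\!\left(\varepsilon^{H/2}\sqrt{2\ln\ln(1/\varepsilon)}\right)$, respectively $o\!\left(\varepsilon^{H}\sqrt{2\ln\ln(1/\varepsilon)}\right)$, and $C_{5,H,\kappa},C_{6,H,\kappa}$ are these fBm constants scaled by $C_{1,H,\kappa}$ and $\kappa^{-1/2}$.

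The only genuine subtlety concerns \eqref{eq LIL 1}: the remainder in Theorem~\ref{thm main}(a) is guaranteed $C^\infty$ only on $(0,\infty)$, so the Lipschitz bound used above holds on $[a,b]\subset(0,\infty)$ but not automatically up to $t=0$. There I would take $[a,b]$ bounded away from the origin, or treat the endpoint separately using only the continuity of $R$. No such difficulty arises in the spatial statements \eqref{eq LIL 2} and \eqref{eq local moduli 2}, where Theorem~\ref{thm main}(b) furnishes a remainder that is smooth on all of $\mathbb R$, nor in the temporal local statement \eqref{eq local moduli 1}, whose base point is interior. What then remains is the routine bookkeeping of checking that the scaled fBm constants coincide with the $C_{i,H,\kappa}$ as stated.
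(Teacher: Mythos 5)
Your proposal is correct and follows exactly the route the paper intends: the paper gives no written proof of this corollary, merely asserting that it follows from Theorem \ref{thm main} together with the global and local moduli of continuity for fBm in \cite[Chapter 7]{MRm}, which is precisely the decomposition-plus-negligible-smooth-remainder argument you spell out. Your added care about the remainder being $C^\infty$ only on $(0,\infty)$ in the temporal case is a reasonable refinement the paper does not bother to address.
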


By applying Theorem \ref{thm main} and the Chung-type law of iterated logarithm in \cite [Theorem 3.3]{MR}, we have
\begin{corollary}\label{prop sample regularity2 }

For every $t>0$ and $x\in \mathbb R$, we have
 \begin{align}\label{eq CLIL 1}
   \varliminf_{\varepsilon \rightarrow 0+}\frac{\sup_{0<|t-s|\le \varepsilon} |u_{t}(x)-u_{s}(x)|}{\left(\varepsilon/  \ln\ln (1/\varepsilon)\right)^{H/2} }= C_{7, H, \kappa},\ \ \     a.s.;
\end{align}
and
 \begin{align}\label{eq CLIL 1}
   \varliminf_{\varepsilon \rightarrow 0+}\frac{\sup_{0<|x-y|\le \varepsilon} |u_{t}(x)-u_{t}(y)|}{\left(\varepsilon/  \ln\ln (1/\varepsilon)\right)^{H} }= C_{8, H, \kappa},\ \ \     a.s..
\end{align}

\end{corollary}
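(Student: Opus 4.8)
The plan is to transfer the Chung-type law of the iterated logarithm for fractional Brownian motion, recorded in \cite[Theorem 3.3]{MR}, through the decomposition of Theorem \ref{thm main}, so that the only real work is to check that the smooth remainder makes a negligible contribution. Fix $x\in\mathbb R$ and $t>0$. By Theorem \ref{thm main}(a) I would write
$$
u_s(x)=C_{1,H,\kappa}X_s+Y_s,\qquad s\ge0,
$$
where $\{X_s\}_{s\ge0}$ is a fBm of Hurst index $H/2$ and $\{Y_s\}$ has a version that is infinitely differentiable on $(0,\infty)$. Since $t>0$, the trajectory of $Y$ is $C^1$ on a compact neighbourhood of $t$, hence Lipschitz there with some a.s. finite (random) constant $L$, so that $\sup_{0<|t-s|\le\varepsilon}|Y_t-Y_s|\le L\varepsilon$ for all sufficiently small $\varepsilon>0$.

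For such $\varepsilon$ set $A(\varepsilon)=\sup_{0<|t-s|\le\varepsilon}|u_t(x)-u_s(x)|$ and $B(\varepsilon)=\sup_{0<|t-s|\le\varepsilon}|X_t-X_s|$. From the identity $u_t(x)-u_s(x)=C_{1,H,\kappa}(X_t-X_s)+(Y_t-Y_s)$ the triangle inequality gives, for every admissible $s$,
$$
\bigl|\,|u_t(x)-u_s(x)|-C_{1,H,\kappa}|X_t-X_s|\,\bigr|\le|Y_t-Y_s|\le L\varepsilon,
$$
and taking suprema over $s$ yields $|A(\varepsilon)-C_{1,H,\kappa}B(\varepsilon)|\le L\varepsilon$. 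Dividing by $(\varepsilon/\ln\ln(1/\varepsilon))^{H/2}$, the resulting error is at most $L\varepsilon^{1-H/2}(\ln\ln(1/\varepsilon))^{H/2}$, which tends to $0$ because $1-H/2>0$. Consequently the $\varliminf$ of $A(\varepsilon)/(\varepsilon/\ln\ln(1/\varepsilon))^{H/2}$ equals $C_{1,H,\kappa}$ times the $\varliminf$ of $B(\varepsilon)/(\varepsilon/\ln\ln(1/\varepsilon))^{H/2}$. By \cite[Theorem 3.3]{MR} applied to $X$ at the fixed point $t$, the latter is almost surely a deterministic constant $c_{H/2}$ depending only on the index $H/2$, which establishes the first assertion with $C_{7,H,\kappa}=C_{1,H,\kappa}\,c_{H/2}$.

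The second assertion is handled identically, starting from Theorem \ref{thm main}(b): for fixed $t>0$ one has $u_t(x)=\kappa^{-1/2}B(x)+(\text{smooth in }x)$ with $\{B(x)\}_{x\in\mathbb R}$ a fBm of Hurst index $H$, the remainder is $C^1$ near the fixed base point, the error ratio is now controlled by $\varepsilon^{1-H}(\ln\ln(1/\varepsilon))^{H}\to0$, and \cite[Theorem 3.3]{MR} delivers $C_{8,H,\kappa}=\kappa^{-1/2}c_{H}$. The only delicate point is the passage that replaces $A(\varepsilon)$ by $C_{1,H,\kappa}B(\varepsilon)$ (resp.\ $\kappa^{-1/2}B(\varepsilon)$) inside the $\varliminf$; this rests solely on the fact that the remainder increments are $O(\varepsilon)$ while the normalisation decays only like $\varepsilon^{H/2}$ (resp.\ $\varepsilon^{H}$) with $H/2,H<1$, so the smooth part cannot influence the liminf. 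No self-similarity estimate or Gaussian computation beyond the cited Chung-type theorem for fBm is required.
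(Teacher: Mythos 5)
Your argument is correct and is exactly the route the paper intends: the paper states this corollary without a written proof, asserting only that it follows from Theorem \ref{thm main} together with \cite[Theorem 3.3]{MR}, and your write-up supplies precisely the missing details (local Lipschitz bound for the smooth remainder, the two-sided estimate $|A(\varepsilon)-C_{1,H,\kappa}B(\varepsilon)|\le L\varepsilon$, and the observation that $\varepsilon^{1-H/2}(\ln\ln(1/\varepsilon))^{H/2}\to 0$). No gaps.
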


\section{The proof of Theorem \ref{thm main}}

\subsection{The proof  of  (a) in Theorem \ref{thm main}}

The method of proof is similar to those of \cite[Theorem 3.3]{K} and \cite[Proposition 3.1]{FKM},  but is   complicated in the   fractional noise case.

Choose and fix some $x\in \mathbb R$. For every $t,\varepsilon >0$,  by \eqref{eq solut}, we have
\begin{align*}
&u_{t+\varepsilon}(x)-u_t(x)\notag\\
=& \int_0^t\int_{\mathbb R}[p_{t+\varepsilon-s}(x-y)-p_{t-s}(x-y)] W(ds, dy)+ \int_t^{t+\varepsilon}\int_{\mathbb R} p_{t+\varepsilon-s}(x-y)  W(ds, dy).
\end{align*}
 Let
\begin{align*}
 J_1:=&\int_0^t\int_{\mathbb R}[p_{t+\varepsilon-s}(x-y)-p_{t-s}(x-y)] W(ds, dy),\\
  J_2:=&\int_t^{t+\varepsilon}\int_{\mathbb R} p_{t+\varepsilon-s}(x-y)  W(ds, dy).
\end{align*}
The construction of the Gaussian noise $W$, which is white in time, ensures that $J_1$ and $J_2$ are independent mean-zero Gaussian random variables. Thus,
\begin{align*}
\mathbb E\left[|u_{t+\varepsilon}(x)-u_t(x)|^2 \right]
= \mathbb E(J_1^2)+\mathbb E(J_2^2).
\end{align*}
Let us compute their variances respectively.   First, we compute the variance of $J_2$ by noting that
\begin{align*}
\mathbb E(J_2^2)=&c_{1, H}\int_t^{t+\varepsilon} \int_{\mathbb R}\left|\mathcal F p_{t+\varepsilon-s}(\xi) \right|^{2}|\xi|^{1-2H}ds d\xi\\
=&   c_{1, H}   \int_t^{t+\varepsilon} \int_{\mathbb R} e^{-\kappa ( t+\varepsilon-s) |\xi|^2}|\xi|^{1-2H}ds d\xi\\
=&   c_{1, H}   \int_0^{\varepsilon} \int_{\mathbb R} e^{-\kappa s |\xi|^2}|\xi|^{1-2H}ds d\xi.
\end{align*}
The change of variables $\tau:=\sqrt{\kappa s}\xi$ yields
\begin{align}\label{eq J2}
\mathbb E(J_2^2)=&    c_{1, H}   \int_0^{\varepsilon} \int_{\mathbb R} e^{-|\tau|^2}|\tau|^{1-2H} (\kappa s)^{-1+H}  ds d\tau\notag\\
=& c_{1, H}\Gamma(1-H)   H^{-1}\kappa^{H-1}\varepsilon^{H}.
\end{align}
For the term $J_1$,   we have
\begin{align}\label{eq J1}
\mathbb E(J_1^2)=& c_{1, H}\int_0^{t} \int_{\mathbb R}\left|\mathcal F p_{t+\varepsilon-s}(\xi)- \mathcal F p_{t-s}(\xi)\right|^{2}|\xi|^{1-2H}ds d\xi\notag\\
=& c_{1, H}\int_0^{t} \int_{\mathbb R}\left|\mathcal F p_{s+\varepsilon}(\xi)- \mathcal F p_{s}(\xi)\right|^{2}|\xi|^{1-2H}ds d\xi\notag\\
=&c_{1, H}\int_0^{t} \int_{\mathbb R}e^{-\kappa s|\xi|^2}\left(1-e^{-\frac{\kappa \varepsilon |\xi|^2}{2}}\right)^2 |\xi|^{1-2H}ds d\xi.
\end{align}
This  integral is hard to evaluate. By the change of variables and Lemma \ref{lem integ} in appendix, we have
\begin{align}\label{eq J12}
\int_0^{\infty} \int_{\mathbb R}e^{-\kappa s |\xi|^2}\left(1-e^{-\frac{\kappa \varepsilon |\xi|^2}{2}}\right)^2 |\xi|^{1-2H}ds d\xi =& \kappa^{-1}\int_{\mathbb R} \left(1-e^{-\frac{\kappa \varepsilon  |\xi|^2}{2}}\right)^2 |\xi|^{-1-2H}  d\xi\notag\\
=&   \varepsilon^H \kappa ^{H-1}\int_{\mathbb R}\left(1-e^{-\frac{  |\tau|^2}{2}}\right)^2 |\tau|^{-1-2H}  d\tau\notag\\
=&\Gamma(1-H) H^{-1}(2^{1-H}-1)\kappa^{H-1}\varepsilon^H.
\end{align}
Therefore,
\begin{align}\label{eq J13}
\mathbb E(J_1^2)=& c_{1, H} \Gamma(1-H) H^{-1}(2^{1-H}-1)\kappa^{H-1}\varepsilon^H\notag\\
&-   c_{1, H}\int_t^{\infty} \int_{\mathbb R}e^{-\kappa s |\xi|^2}\left(e^{-\frac{\kappa \varepsilon |\xi|^2}{2}}-1\right)^2 |\xi|^{1-2H}ds d\xi,
\end{align}
and hence
\begin{align}\label{eq J3}
 \mathbb E(J_1^2)+\mathbb E(J_2^2)= &c_{1, H}\Gamma(1-H)   H^{-1} 2^{1-H} \kappa^{H-1}\varepsilon^{H}\notag\\
 &-  c_{1, H}\int_t^{\infty} \int_{\mathbb R}e^{-\kappa s |\xi|^2}\left(1-e^{-\frac{\kappa \varepsilon |\xi|^2}{2}}\right)^2 |\xi|^{1-2H}ds d\xi.
\end{align}
Let $\eta$ denote a  white noise  on $\mathbb R$ independent of $W$, and consider the Gaussian process $\{T_t\}_{t\ge0}$ defined by
$$
T_t:=\left(\frac{c_{1,H}}{\kappa}\right)^{\frac12}\int_{-\infty}^{\infty} \left( 1- e^{-\frac{ \kappa t |\xi|^2}{2}}\right) |\xi|^{-\frac12-H}\eta(d\xi),\   \ t\ge0.
$$
Then $\{T_t\}_{t\ge0}$ is a well-defined mean-zero Wiener integral process, $T_0=0$, and
$$
{\rm Var} (T_t)=\frac{c_{1,H}}{\kappa}\int_{-\infty}^{\infty} \left( 1- e^{-\frac{ \kappa t |\xi|^2}{2}}\right)^2 |\xi|^{-1-2H}d\xi<\infty,  \ \ \text{for all } t>0.
$$
Furthermore, we note that for any $t,\varepsilon>0$,
\begin{align}\label{eq T}
\mathbb E(|T_{t+\varepsilon}-T_t|^2)=&\frac{c_{1,H}}{\kappa} \int_{-\infty}^{\infty} \left(   e^{-\frac{ \kappa t |\xi|^2}{2}}- e^{-\frac{ \kappa (t+\varepsilon) |\xi|^2}{2}}\right)^2 |\xi|^{-1-2H}d\xi\notag\\
=&\frac{c_{1,H}}{\kappa}\int_{-\infty}^{\infty}  e^{- \kappa t |\xi|^2}  \left( 1 - e^{-\frac{ \kappa \varepsilon |\xi|^2}{2}}\right)^2 |\xi|^{-1-2H}d\xi\notag\\
=&  c_{1, H}  \int_t^{\infty} \int_{-\infty}^{\infty} e^{- \kappa s |\xi|^2}  \left( 1 - e^{-\frac{ \kappa \varepsilon |\xi|^2}{2}}\right)^2 |\xi|^{1-2H} dsd\xi.
 \end{align}
This is precisely the missing integral in \eqref{eq J3}. Therefore, by the independence of $T$ and $u$, we can rewrite \eqref{eq J3} as follows:
\begin{align}
&\mathbb E\left(\left|(u_{t+\varepsilon}(x)+T_{t+\varepsilon})-(u_t(x)+T_t)\right|^2\right)\notag\\
=&c_{1, H}\Gamma(1-H)   H^{-1}\kappa^{H-1} 2^{1-H}\varepsilon^{H}\notag\\
=&\frac{ \sin( H \pi  )\Gamma(2H+1)\Gamma(1-H) }{2^H H  \kappa^{1-H} \pi} \varepsilon^{H}\notag\\
=&  \frac{2^{1-H}\Gamma(2H)}{\kappa^{1-H}\Gamma(H)}\varepsilon^{H},
\end{align}
This implies that
$$
X_t:=\left(\frac{2^{1-H}\Gamma(2H)}{\kappa^{1-H}\Gamma(H)}\right)^{-\frac12}(u_t(x)+T_t),  \  \ \ t\ge0,
$$
is a fBm with   Hurst parameter $H/2$. Using the same argument in the proof of \cite [Lemma 3.6]{K}, we know that the random process $\{T(t)\}_{t\ge0}$ has a version that is   infinitely-differentiable on $(0, \infty)$.

\subsection{The proof  of  (b) in Theorem \ref{thm main}} This result has been proved in \cite[Proposition 3.1]{FKM} when $H=1/2$. We will prove it for the case of $H\neq 1/2$.

 For any $t>0, x\in \mathbb R$,  let
  \begin{equation}\label{eq S}
    S_t(x):=\int_t^{\infty} \int_{\mathbb R} \left[p_s(x-w)-p_s(w)\right]\zeta(ds,dw),
  \end{equation}
 where $p_t(x)$ is given by \eqref{eq p},  $\zeta$ is a Gaussian noise independent with $W$, which is white in time and fractional in space variable   with Hurst parameter $H$.
By the   argument  in the proof of \cite [Lemma 3.6]{K}, we know that $\{S_t(x)\}_{x\in \mathbb R}$ admits a $C^{\infty}$-version for any $t>0$.

Next, we will prove that
\begin{equation}\label{eq claim}
\mathbb E\left[\left|\left(u_t(x+\varepsilon)+S_t(x+\varepsilon) \right)-\left(u_t(x)+S_t(x) \right) \right|^2\right]= \kappa^{-1}\varepsilon^{2H}.
\end{equation}
Then
\begin{equation}\label{aim}
B(x):= \kappa^{1/2}(u_t(x)+S_t(x)),\ \ \ \ x\in\mathbb R,
\end{equation}
is a two-side fBm with parameter $H$, and (b) in Theorem \ref{thm main} holds.

 In the remaining part, we will prove \eqref{eq claim} for $H\in (0,1/2)$ and  $H\in (1/2,1)$, respectively.

\subsubsection{$(0<H<1/2)$}
  For any fixed $t>0$ and $\varepsilon>0$, by Plancherel's identity with respect to $y$ and the explicit formula for $\mathcal F p_t$, we have
\begin{align}
&\mathbb E\left(|u_t(x+\varepsilon)-u_t(x)|^2 \right) \notag \\
=&c_{2, H}^2\int_0^t\int_{\mathbb R^2}\Big[(p_{t-s}(x+\varepsilon-y+z)-p_{t-s}(x-y+z))-(p_{t-s}(x+\varepsilon-y)-p_{t-s}(x-y))\Big]^2 \notag \\
& \ \ \ \ \ \ \ \ \ \ \ \ \ \ \ \ \times |z|^{2H-2}dsdydz\notag\\
=& \frac{1}{2\pi}c_{2,H}^2 \int_0^t \int_{\mathbb R^2} e^{-\kappa (t-s) |\xi|^2}\left|e^{i\xi z}-1\right|^2\left|e^{i \xi \varepsilon}-1\right|^2 |z|^{2H-2}ds d\xi dz.
\end{align}
Since $|e^{i\xi z}-1|^2=2(1-\cos (\xi z))$  and   for any $\alpha\in(0,1)$,
\begin{equation}\label{eq iden1}
\int_0^{\infty} \frac{1-cos(\xi z)}{z^{1+\alpha}}dz= \alpha^{-1}\Gamma(1-\alpha)\cos(\pi\alpha/2) \xi^{\alpha},
\end{equation}
(see  \cite [Lemma D.1]{BJQ}), we have
\begin{align}\label{uoff}
&\mathbb E\left(|u_t(x+\varepsilon)-u_t(x)|^2 \right) \notag \\
=&\frac{2 \Gamma(2H)\sin( H \pi)}{(1-2H) \pi}c_{2, H}^2 \int_0^t \int_{\mathbb R} e^{-\kappa (t-s)|\xi|^2}
\left|e^{i \xi \varepsilon}-1\right|^2 |\xi|^{1-2H}dsd\xi  \notag \\
=&\frac{4\Gamma(2H)\sin( H \pi)}{(1-2H) \kappa\pi }c_{2, H}^2  \int_{\mathbb R}\left(1- e^{- \kappa t  |\xi|^2}\right)\left(\frac{1-\cos (\xi \varepsilon)}{|\xi|^{1+2H}}\right)d\xi \notag \\
=& \frac{4\Gamma(2H)\sin( H \pi)}{(1-2H) \kappa\pi }c_{2, H}^2 \notag \\
&\ \ \ \times\left(\frac{\Gamma(1-2H)\cos( H \pi)\varepsilon^{2H}}{H}   - \int_{\mathbb R} e^{- \kappa t |\xi|^2} \left(\frac{1-\cos (\xi \varepsilon)}{|\xi|^{1+2H}}\right)d\xi \right).
\end{align}
Recall $S_t(x)$ defined by \eqref{eq S}. Using the same techniques above, we have
\begin{align}
& \mathbb E\left(|S_t(x+\varepsilon)-S_t(x)|^2 \right) \notag \\
=& \frac{2\Gamma(2H)\sin( H \pi)}{ (1-2H)\pi}c_{2, H}^2 \int_t^{\infty} \int_{\mathbb R} e^{-\kappa s |\xi|^2}
\left|e^{i \xi \varepsilon}-1\right|^2 |\xi|^{1-2H}ds d\xi\notag \\
=&\frac{4\Gamma(2H)\sin( H \pi)}{(1-2H) \kappa\pi }c_{2, H}^2 \int_{\mathbb R} e^{- \kappa t |\xi|^2} \left(\frac{1-\cos (\xi \varepsilon)}{|\xi|^{1+2H}}\right)d\xi,
\end{align}
which is exactly the missing integral in \eqref{uoff}.  By the independence of $W$ and $\zeta$, we know that $u_t(x)$ and $S_t(x)$ are independent. Therefore, we have
\begin{equation}
\begin{aligned}
&\mathbb E\left[\left|\left(u_t(x+\varepsilon)+S_t(x+\varepsilon) \right)-\left(u_t(x)+S_t(x) \right) \right|^2\right]\\
=& \frac{\sin(2 H \pi)\Gamma(2H)\Gamma(1-2H) }{\kappa \pi }
 \varepsilon^{2H}\\
 =&  \kappa^{-1}  \varepsilon^{2H}.
\end{aligned}
\end{equation}
\quad

\subsubsection{$(1/2<H<1)$}
For any fixed $t>0$ and $\varepsilon>0$,
\begin{align}
&\mathbb E\left(|u_t(x+\varepsilon)-u_t(x)|^2 \right) \notag \\
=&c_{3, H}^2\int_0^t\int_{\mathbb R^2} (p_{t-s}(x+\varepsilon-y+z)-p_{t-s}(x-y+z))(p_{t-s}(x+\varepsilon-y)-p_{t-s}(x-y)) \notag \\
& \ \ \ \ \ \ \ \ \ \ \ \ \ \ \ \ \ \ \times |z|^{2H-2}dsdydz.
\end{align}
Since $p_{t-s}(x+y)p_{t-s}(x)=\frac12\left[p^2_{t-s}(x+y)+p^2_{t-s}(x)-(p_{t-s}(x+y)-p_{t-s}(x))^2\right]$,  by Plancherel's identity with respect to $x$ and the explicit formula for $\mathcal F p_t$, we have
$$ \int_{\mathbb R}\int_{\mathbb R} p_{t-s}(x+y)p_{t-s}(x)|y|^{2H-2} dxdy = \frac{1}{2\pi}\int_{\mathbb R}\int_{\mathbb R} e^{-\kappa (t-s) |\xi|^2} \cos(\xi y) |y|^{2H-2} d\xi dy.$$
Therefore,
\begin{align}
&\mathbb E\left(|u_t(x+\varepsilon)-u_t(x)|^2 \right) \notag \\
=&\frac{1}{2\pi}c_{3, H}^2\int_0^t\int_{\mathbb R^2}e^{-\kappa (t-s) |\xi|^2}\left[ 2\cos(\xi z)-\cos(\xi(\varepsilon+z))-\cos(\xi(\varepsilon-z))\right]|z|^{2H-2}ds d\xi dz \notag \\
=&\frac1\pi c_{3, H}^2\int_0^t\int_{\mathbb R^2}e^{-\kappa (t-s) |\xi|^2}\cos(\xi z)(1-\cos(\xi \varepsilon))|z|^{2H-2}ds d\xi dz.
\end{align}
By formula $\left( 3.761-9 \right)$  of \cite {GR}, we know that
\begin{equation}\label{eq formula}
\int_0^{\infty} \frac{\cos(ax)}{x^{1-\mu}}dx=\frac{\Gamma(\mu)}{a^\mu}\cos(\pi\mu/2),\ \ \   \text{for any} \ \mu \in (0,1), a>0.
\end{equation}
Since $H \in (1/2,1)$, using \eqref{eq formula} with $\mu=2H-1,$ we have
\begin{align}
&\mathbb E\left(|u_t(x+\varepsilon)-u_t(x)|^2 \right) \notag \\
=&\frac{2\Gamma(2H-1)\cos(\pi(2H-1)/2)}{\pi}c_{3, H}^2\int_0^t\int_{\mathbb R}e^{-\kappa (t-s)  |\xi|^2} (1-\cos(\xi \varepsilon) |\xi|^{1-2H}ds d\xi \notag \\
=&\frac{2\Gamma(2H-1)\sin( H \pi)}{\kappa \pi}c_{3, H}^2 \int_{\mathbb R}\left(1- e^{- \kappa t  |\xi|^2}\right)\left(\frac{1-\cos (\xi \varepsilon)}{|\xi|^{1+2H}}\right)d\xi \notag \\
=&\frac{2\Gamma(2H-1)\sin( H \pi)}{\kappa \pi}c_{3, H}^2 \notag \\
&\ \ \ \times \left( -\frac{\Gamma(2-2H)\cos( H \pi)}{H(2H-1)}\varepsilon^{2H} - \int_{\mathbb R} e^{- \kappa t |\xi|^2} \left(\frac{1-\cos (\xi \varepsilon)}{|\xi|^{1+2H}}\right)d\xi \right),
\end{align}
where the last equality we used the identity:
\begin{equation}
\int_0^{\infty} \frac{1-cos(\xi z)}{z^{1+\alpha}}dz= -\alpha^{-1}(\alpha-1)^{-1}\Gamma(2-\alpha)\cos(\pi\alpha/2) \xi^{\alpha},\ \ \ \ \alpha\in(1,2),
\end{equation}
(see  \cite [Lemma D.1]{BJQ}).
Recall $S_t(x)$  given by  \eqref{eq S}. Using the same techniques above, we have
\begin{align}
& \mathbb E\left(|S_t(x+\varepsilon)-S_t(x)|^2 \right) \notag \\
=& \frac{2\Gamma(2H-1)\sin( H \pi)}{\pi}c_{3, H}^2 \int_t^{\infty} \int_{\mathbb R} e^{-\kappa s |\xi|^2}
\left|e^{i \xi \varepsilon}-1\right|^2 |\xi|^{1-2H}ds d\xi\notag \\
=&\frac{2\Gamma(2H-1)\sin( H \pi)}{\kappa \pi}c_{3, H}^2 \int_{\mathbb R} e^{- \kappa t |\xi|^2} \left(\frac{1-\cos (\xi \varepsilon)}{|\xi|^{1+2H}}\right)d\xi.
\end{align}
By the independence of $W$ and $\zeta$, we know that $u_t(x)$ and $S_t(x)$ are independent. Therefore, we have
\begin{equation}
\begin{aligned}
&\mathbb E\left[\left|\left(u_t(x+\varepsilon)+S_t(x+\varepsilon) \right)-\left(u_t(x)+S_t(x) \right) \right|^2\right]\\
=& \frac{- \sin(2H \pi) \Gamma(2H-1) \Gamma(2-2H) }{\kappa \pi} \varepsilon^{2H}\\
=&\kappa^{-1} \varepsilon^{2H}.
\end{aligned}
\end{equation}

\section{Appendix}
\begin{lemma}\label{lem integ} The following identity holds:
$$\int_0^{\infty}\left(e^{-\frac{x^2}{2}}-1\right)^2 x^{-1-2H}  dx=  \Gamma(1-H) H^{-1}(2^{-H}-2^{-1}).$$
\end{lemma}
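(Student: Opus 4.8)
The plan is to reduce the stated integral to a combination of regularized Gamma-type integrals after a change of variables. First I would substitute $u=x^2$ (so that $x^{-1-2H}\,dx=\tfrac12 u^{-1-H}\,du$), which rewrites the left-hand side as $\tfrac12\int_0^\infty (e^{-u/2}-1)^2 u^{-1-H}\,du$. Expanding the square and regrouping gives the clean identity $(e^{-u/2}-1)^2=(e^{-u}-1)-2(e^{-u/2}-1)$, so the task becomes evaluating $\int_0^\infty (e^{-bu}-1)u^{-1-H}\,du$ at $b=1$ and $b=\tfrac12$. Note that the full integrand is genuinely integrable on $(0,\infty)$ — near $0$ the bracket is $O(u^2)$ and near $\infty$ it is bounded, while $u^{-1-H}$ is integrable at infinity — but the individual summands are not, since $u^{-1-H}$ fails to be integrable at the origin; this is the reason one cannot simply split into separate $\Gamma$-integrals.

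Second, I would establish the auxiliary identity
$$\int_0^\infty \bigl(e^{-bu}-1\bigr)u^{-1-H}\,du=-\frac{\Gamma(1-H)}{H}\,b^{H},\qquad b>0,\ H\in(0,1).$$
The convergence of this integral for $H\in(0,1)$ follows from the two estimates above. To evaluate it I would differentiate in $b$: since the differentiated integrand $-u\,e^{-bu}u^{-1-H}=-e^{-bu}u^{-H}$ is absolutely integrable (now the exponent $-H$ exceeds $-1$), differentiation under the integral sign is justified and yields $-\int_0^\infty e^{-bu}u^{-H}\,du=-\Gamma(1-H)\,b^{H-1}$ by the standard Gamma integral. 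Integrating back in $b$ gives $-\frac{\Gamma(1-H)}{H}b^{H}+C$, and the constant is pinned down to $C=0$ by letting $b\to0^+$, where both sides tend to $0$ (the left by dominated convergence, using $|e^{-bu}-1|u^{-1-H}\le \min(u,1)u^{-1-H}$ for $b\le 1$; the right because $H>0$).

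Finally, inserting $b=1$ and $b=\tfrac12$ into the auxiliary identity and combining through $(e^{-u/2}-1)^2=(e^{-u}-1)-2(e^{-u/2}-1)$ produces $\frac{\Gamma(1-H)}{H}(2^{1-H}-1)$ for $\int_0^\infty(e^{-u/2}-1)^2u^{-1-H}\,du$; multiplying by the factor $\tfrac12$ from the substitution and writing $\tfrac12(2^{1-H}-1)=2^{-H}-2^{-1}$ recovers exactly $\Gamma(1-H)H^{-1}(2^{-H}-2^{-1})$. The only delicate point — and the main obstacle — is the rigorous justification of the regularized auxiliary identity: one must accommodate the non-integrability at the origin by differentiating under the integral sign (where the derivative integral is absolutely convergent) and then fixing the integration constant via the $b\to0^+$ limit, rather than attempting to evaluate the divergent pieces directly.
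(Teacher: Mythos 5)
Your proof is correct and follows essentially the same route as the paper: a quadratic substitution reduces the integral to $\int_0^\infty(e^{-w}-1)^2w^{-1-H}\,dw$, the square is expanded into a combination of exponential differences, and the resulting Frullani-type integrals evaluate to $\Gamma(1-H)H^{-1}$ times powers of the parameters. The only cosmetic difference is that the paper computes $I_{a,b}=\int_0^\infty(e^{-aw}-e^{-bw})w^{-1-H}\,dw$ by writing $e^{-aw}-e^{-bw}=w\int_a^b e^{-rw}\,dr$ and applying Fubini, whereas you differentiate in the parameter $b$ under the integral sign and pin down the constant by letting $b\to0^+$ --- two equivalent ways of carrying out the same evaluation.
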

\begin{proof}
The proof is inspired by Lemma A.1 in \cite{K}. By the change of  variables $w=x^2/2$, we have
\begin{align*}
\int_0^{\infty}\left(e^{-\frac{x^2}{2}}-1\right)^2 x^{-1-2H}  dx
=& 2^{-1-H} \int_0^{\infty}\left(e^{-w}-1\right)^2 w^{-1-H}  dw\notag\\
=& 2^{-1-H} (I_{0,1}-I_{1,2}),
\end{align*}
where $I_{a, b}:=\int_0^{\infty}\left( e^{-aw}- e^{-b w}\right)  w^{-1-H}  dw$ for all $a,b\ge0$. Since  $e^{-aw}-e^{-b w}=w\int_a^b e^{-rw}dr$, we have
\begin{align*}
I_{a, b}=  \int_0^{\infty}\int_a^b e^{-rw} w^{-H}drdw
= \Gamma(1-H) \int_a^b r ^{-1+H}dr
= \Gamma(1-H) H^{-1} (b^H-a^H).
\end{align*}
Thus, $I_{0,1}-I_{1,2}=\Gamma(1-H) H^{-1} (2-2^H)$, and the lemma follows.
\end{proof}

\vskip0.5cm

\vskip0.5cm

\noindent{\bf Acknowledgments}:   R. Wang is supported by  NSFC (11871382), Chinese State Scholarship Fund Award by the China Scholarship Council  and  Youth Talent Training Program by Wuhan University.

\vskip0.5cm

\end{document}